\newtheorem{thm}{Theorem}[section]
\newtheorem{lemma}[thm]{Lemma}
\newtheorem {proposition} [thm]{Proposition}
\newtheorem {corollary} [thm]{Corollary}
\theoremstyle{definition}
\newtheorem{definition}[thm]{Definition}
\theoremstyle{remark}
\newtheorem{remark}[thm]{Remark}
\numberwithin{equation}{section}
\newcommand{\al}{\alpha}
\newcommand{\clk}{\sigma}
\newcommand{\mbb}{\mu}
\newcommand{\plh}{PM}
\newcommand{\diin}{\theta}
\newcommand{\bdd}{b}
\newcommand{\ubb}{V_b}
\newcommand{\ier}{I}
\newcommand{\iker}{K}
\newcommand{\krr}{k}
\newcommand{\uer}{V_I}
\newcommand{\dbr}{\mathcal{H}}
\newcommand{\dom}{\mathcal{D}}
\newcommand{\ddo}{\partial\mathcal{D}}
\newcommand{\za}{\zeta}
\newcommand{\ph}{\varphi}
\newcommand{\hol}{\mathcal{H}ol}
\newcommand{\Rl}{\mathrm{Re\,}}
\newcommand{\Dbb}{\mathbb D}
\newcommand{\Tbb}{\mathbb T}
\newcommand{\dn}{{\mathbb D}^n}
\newcommand{\bn}{B_n}
\newcommand{\spn}{S_n}
\newcommand{\mn}{\Sigma}
\newcommand{\Nbb}{\mathbb N}
\newcommand{\kla}{I^*(H^2)}
\newcommand{\sph}{\partial B}
\begin{document}

\title[Clark measures]
{Clark measures and de Branges--Rovnyak spaces in several variables}


\author{Aleksei B.\ Aleksandrov}
\address{Department of Mathematics and Computer Science,
St.~Petersburg State University,
Line 14th (Vasilyevsky Island), 29, St.~Petersburg 199178, Russia}
\email{alex@pdmi.ras.ru}

\author{Evgueni Doubtsov}
\address{Department of Mathematics and Computer Science,
St.~Petersburg State University,
Line 14th (Vasilyevsky Island), 29, St.~Petersburg 199178,
Russia}
\email{dubtsov@pdmi.ras.ru}
\thanks{The research on Sections~2 and 3 was supported by Russian Science Foundation (grant No.~20-61-46016);
the research on Sections~1 and 4 was supported by Russian Science Foundation (grant No.~19-11-00058).}


\date{}

\dedicatory{}

\begin{abstract}
Let $B_n$ denote the unit ball of $\mathbb{C}^n$, $n\ge 1$, and let
$\mathcal{D}$ denote a finite product of $B_{n_j}$, $j\ge 1$.
Given a non-constant holomorphic function $b: \mathcal{D} \to B_1$, we study the corresponding family
$\sigma_\alpha[b]$, $\alpha\in\partial B_1$,
of Clark measures on the distinguished boundary $\partial\mathcal{D}$.
We construct a natural unitary operator from the de Branges--Rovnyak space $\mathcal{H}(b)$
onto the Hardy space $H^2(\sigma_\alpha)$.
As an application, for $\mathcal{D}= B_n$ and an inner function $I: B_n \to B_1$, we show
that the property $\sigma_1[I]\ll\sigma_1[b]$ is
directly related to the membership of an appropriate explicit function
in $\mathcal{H}(b)$.
\end{abstract}

\maketitle

\section{Introduction}\label{s_int}
Let $B_n$ denote the open unit ball of $\mathbb{C}^n$, $n\ge 1$,
and let $\sph_n$ denote the unit sphere.
We also use symbols $\Dbb$ and $\Tbb$ for the unit disc $B_1$ and the unit circle $\sph_1$, respectively.

Given $k\in \Nbb$ and $n_j\in \Nbb$, $j=1,2,\dots, k$, let
\[
\dom = \dom[n_1, n_2, \dots, n_k] = B_{n_1}\times B_{n_2}\cdots \times B_{n_k}  \subset \mathbb{C}^{n_1+ n_2 +\dots + n_k}.
\]
Model examples of $\dom$ are $B_n$ and the polydisc $\mathbb{D}^n$.

Let $C(z, \za) = C_\dom(z, \za)$ denote the Cauchy kernel for $\dom$.
Recall that
\[
  C_{\bn}(z, \za) = \frac{1}{(1- \langle z, \za \rangle)^{n}}, \quad z\in\bn,\ \za\in\spn.
\]
Let $\ddo$ denote the distinguished boundary
$\partial B_{n_1}\times \partial B_{n_2} \cdots \times \partial B_{n_k}$ of $\dom$.
Then
\[
  C_{\dom}(z, \za) =\prod_{j=1}^{k} \frac{1}{(1- \langle z_j, \za_j \rangle)^{n_j}},
  \ \ z=(z_1, z_2, \dots, z_k)\in\dom,\ \za=(\za_1, \za_2, \dots, \za_k)\in\ddo,
\]
where $z_j = (z_{j, 1}, z_{j,2}, \dots, z_{j, n_j}) \in B_{n_j}$ and
$\za_j = (\za_{j, 1}, \za_{j,2}, \dots, \za_{j, n_j}) \in \partial B_{n_j}$.

The corresponding Poisson type kernel is given by the formula
\[
P(z, \za) = \frac{C(z, \za) C(\za, z)}{C(z, z)},
\quad z\in\dom,\ \za\in\ddo.
\]
For $\dom=B_n$, $P(\cdot, \cdot)$ is often called the M\"obius invariant Poisson kernel;
see \cite{Ru80} for further details.

Let $M(\ddo)$ denote the space of complex Borel measures on $\ddo$.
For $\mu\in M(\ddo)$, the Cauchy transform $\mu_+$ is defined as
\[
  \mu_+(z) = \int_{\partial\dom} C(z, \za)\, d\mu(\za), \quad z\in \dom.
\]

\subsection{Clark measures}
Given an $\alpha\in\Tbb$ and a holomorphic function $\bdd: \dom\to \Dbb$, the quotient
\[
\frac{1-|\bdd(z)|^2}{|\al-\bdd(z)|^2}= \Rl \left(\frac{\al+ \bdd(z)}{\al- \bdd(z)} \right), \quad z\in \dom,
\]
is positive and pluriharmonic.
Therefore, there exists a unique positive measure $\clk_\al= \clk_\al[\bdd] \in M(\ddo)$
such that
\[
P[\clk_\al](z) = \Rl \left(\frac{\al+ \bdd(z)}{\al- \bdd(z)} \right), \quad z\in \dom.
\]
After the seminal paper of Clark \cite{Cl72},
various properties and applications of the measures $\clk_\al$ on the unit circle $\Tbb$ have been obtained;
see, for example, reviews \cite{GR15, MM06, PS06, Sa07} for further references.
Several results related to the Clark measures on the unit sphere $\partial B_n$, $n\ge 2$, are given in \cite{AD20}.

\subsection{Model spaces and de Branges--Rovnyak spaces}
Let $\mn$ denote the normalized Lebesgue measure on $\ddo$.

\begin{definition}\label{d_inner}
A holomorphic function $I:\dom \to \Dbb$ is called \textsl{inner}
if $|I(\za)|=1$ for $\mn$-a.e. $\za\in\ddo$.
\end{definition}

In the above definition,
$I(\za)$ stands, as usual, for
$\lim_{r\to 1-} I(r\za)$.
Recall that the corresponding limit exists $\mn$-a.e.
Also, by the above definition, every inner function is non-constant.

Given an inner function $I$ in $\dom$, we have
\[
P[\clk_\al](\za) =\frac{1-|I(\za)|^2}{|\al-I(\za)|^2}=0 \quad \mn\textrm{-a.e.},
\]
therefore, $\clk_\al = \clk_\al[I]$ is a singular measure. Here and in what follows, this means that
$\clk_\al$ and $\mn$ are mutually singular; in brief, $\clk_\al \bot\mn$.

Let $\hol(\dom)$ denote the space of holomorphic functions in $\dom$.
For $0<p<\infty$, the classical Hardy space $H^p=H^p(\dom)$ consists of those $f\in \hol(\dom)$ for which
\[
\|f\|_{H^p}^p = \sup_{0<r<1} \int_{\ddo} |f(r\za)|^p\, d\mn(\za) < \infty.
\]
As usual, we identify the Hardy space $H^p(\dom)$, $p>0$, and the space
$H^p(\ddo)$ of the corresponding boundary values.

For an inner function $\diin$ on $\Dbb$, the classical
model space $K_\diin$ is defined as
\[
K_\diin = H^2(\Tbb)\ominus \diin H^2(\Tbb).
\]
Clark \cite{Cl72} introduced and studied a family of
useful unitary operators
$U_\al : K_\diin \to L^2(\clk_\al)$, $\al\in\Tbb$.

For an inner function $I$ in $\dom$, there are several reasonable generalizations of $K_\diin$.
Consider the following direct analog of $K_\diin$:
\[
  \kla = H^2 \ominus I H^2.
\]
For $\dom = \bn$, it is shown in \cite[Theorem~5.1]{AD20} that
Clark's construction appropriately extends to $\kla$ and also provides natural unitary operators
$T_\al: \kla\to L^2(\clk_\al)$, $\al\in\Tbb$.

Observe that
\[
C(\za, z) = (1- \overline{I(z)} I(\za))C(\za, z)
+ \overline{I(z)} I(\za) C(\za, z)
\in \kla\oplus I H^2
\]
as functions of $\za$. Therefore,
\[
K(z, \za)
= (1-I(z)\overline{I(\za)}) C(z, \za)
\]
is the reproducing kernel
for the Hilbert space $\kla$ at $z\in\dom$,
that is,
\[
g(z) = \int_{\ddo} g(w) K(z, w) \, d\mn(w), \quad z\in\dom,
\]
for all $g\in \kla$.

Now, let $\bdd: \dom\to \Dbb$ be an arbitrary non-constant holomorphic function.
Direct inspection shows that the function
\[
k_b(z, w) = (1-b(z)\overline{b(w)}) C(z, w)
\]
has the reproducing kernel properties.
The corresponding Hilbert space $\dbr(b)\subset H^2$ is
called a de Branges--Rovnyak space.
In particular, $\kla = \dbr(I)$ for an inner function $I$.
Further details are given in \cite[Chapter~II]{Sa94} for $\Dbb$ in the place of $\dom$.

For $\al\in\Tbb$, Sarason \cite[Section~III-7]{Sa94} introduced unitary operators
\[
U_{b, \al}: \dbr(b) \to H^2(\clk_\al[\bdd])
\]
and closely related partial isometries
\[
V_{b, \al}: L^2(\clk_\al[\bdd]) \to \dbr(b),
\]
where $\dbr(b)\subset H^2(\Dbb)$ is the de Branges--Rovnyak space generated by $\bdd$,
$H^2(\clk_\al[\bdd])$ is a Hardy type space.
In the present paper, we construct analogous natural operators
$U_{b, \al}$ and $V_{b, \al}$, $\al\in \Tbb$,
for a given non-constant holomorphic function $\bdd: \dom\to \Dbb$; see Theorems~\ref{t_kla} and \ref{t_Vba_df}.

\subsection{Comparison of Clark measures}
Sarason \cite[Section~III-11]{Sa94} argued with the help of $V_{b, \al}$
to compare Clark measures on the unit circle $\Tbb$.
To show that the operators $V_{b, \al}$ are useful in combination with appropriate results in several complex variables,
we obtain the following comparison theorem for the Clark measures on the unit sphere $\sph_n$.

\begin{thm}\label{t_tal_on}
Let $I$ be an inner function in $\bn$ and let $\bdd:\bn\to\Dbb$, $n\ge 2$, be a non-constant
holomorphic function. Let $\clk=\clk_\al[I]$ and $\mu=\clk_\al[\bdd]$, $\al\in\Tbb$, be the corresponding Clark measures
and let $\iker_w(\cdot) = \iker(\cdot, w)$, where $\iker(z, w)$ denotes the reproducing kernel for $\kla$.
Then the following properties are equivalent:
\begin{enumerate}
  \item[(i)] $\clk\ll\mbb$ and $\frac{d\clk}{d\mbb}\in L^2(\mbb)$;
  \item[(ii)] the function
 \[
 \frac{\al-\bdd}{\al-\ier} \iker_w
 \]
 is in the de Branges--Rovnyak space $\dbr(\bdd)$ for all $w\in\bn$;
  \item[(iii)] the function
 \[
 \frac{\al-\bdd}{\al-\ier} \iker_w
 \]
 is in $\dbr(\bdd)$ for some $w\in\bn$.
\end{enumerate}
\end{thm}

\subsection{Organization of the paper}
Auxiliary properties of Clark measures are obtained in Section~\ref{s_aux_clk}.
Operators $U_{b, \al}$ and $V_{b, \al}$ are constructed in Section~\ref{s_clkdBR}.
The final Section~\ref{s_prf} is devoted to the proof of Theorem~\ref{t_tal_on}.

Theorem~\ref{t_tal_on} was announced in extended abstract \cite{ADbcn}.

\section{Cauchy integrals and Clark measures}\label{s_aux_clk}
The following lemma is a particular case of Exercise~1
from \cite[Chapter~8]{KrBook}.

\begin{lemma}\label{l_diag}
Let $F$ be a holomorphic function on $\dom\times \dom$.
If $F(z, \overline{z}) =0$ for all $z\in\dom$,
then $F(z, w) =0$ for all $(z, w) \in \dom\times \dom$.
\end{lemma}

The following key technical proposition is obtained in \cite[Proposition~3.5]{AD20} for $\dom=\bn$.

\begin{proposition}\label{p_cauchy_dbl}
Let $\bdd: \dom\to\Dbb$ be a holomorphic function and
let $\clk_\al = \clk_\al[\bdd]$, $\al\in \Tbb$, be the corresponding Clark measure.
 Then
  \[
  \int_{\ddo} C(z, \za) C(\za, w)\, d\clk_\al(\za) =
  \frac{1- \bdd(z)\overline{\bdd(w)}}{(1-\overline{\al}{\bdd(z)})(1-\al\overline{\bdd(w)})} C(z,w)
  \]
for all $\al\in\Tbb$, $z, w \in\dom$.
\end{proposition}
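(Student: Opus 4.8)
The plan is to view the asserted identity as the polarization of the defining Poisson formula for $\clk_\al$. Denote by $L(z,w)$ the left-hand side and by $R(z,w)$ the right-hand side of the claimed equality. The first observation is that both $L$ and $R$ are sesqui-holomorphic on $\dom\times\dom$, that is, holomorphic in $z$ and anti-holomorphic in $w$. For $L$ this follows because $C(z,\za)$ is holomorphic in $z$ while $C(\za,w)$ is holomorphic in $\overline{w}$; since $\clk_\al$ is a finite positive measure and the integrand, together with all its derivatives, is bounded uniformly in $\za\in\ddo$ for $(z,w)$ in compact subsets of $\dom\times\dom$, one may differentiate under the integral sign. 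For $R$ it is immediate from the explicit form of the Cauchy kernel and the fact that $\bdd(z)$ and $1-\overline{\al}\bdd(z)$ are holomorphic in $z$, while $\overline{\bdd(w)}$ and $1-\al\overline{\bdd(w)}$ are holomorphic in $\overline{w}$.

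Next I would check that $L$ and $R$ agree on the diagonal $w=z$. Using $C(z,\za)C(\za,z)=P(z,\za)C(z,z)$ and the definition of the Clark measure,
\[
L(z,z)=C(z,z)\int_{\ddo}P(z,\za)\,d\clk_\al(\za)=C(z,z)\,\Rl\left(\frac{\al+\bdd(z)}{\al-\bdd(z)}\right)=\frac{1-|\bdd(z)|^2}{|\al-\bdd(z)|^2}\,C(z,z).
\]
On the other hand, since $|\al|=1$ gives $1-\overline{\al}\bdd(z)=\overline{\al}(\al-\bdd(z))$ and hence $(1-\overline{\al}\bdd(z))(1-\al\overline{\bdd(z)})=|\al-\bdd(z)|^2$, the value $R(z,z)$ equals the same expression. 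Thus $L(z,z)=R(z,z)$ for every $z\in\dom$.

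Finally, I would invoke the uniqueness principle for sesqui-holomorphic functions: a function holomorphic in $z$ and anti-holomorphic in $w$ on $\dom\times\dom$ is determined by its restriction to the diagonal. Concretely, setting $u=\overline{w}$ turns $L-R$ into a function $G(z,u)$ jointly holomorphic on $\dom\times\overline{\dom}$, and the diagonal condition becomes $G(z,\overline{z})\equiv 0$. Expanding $G$ in a power series about a point $(z_0,\overline{z_0})$ and writing the vanishing condition in terms of the real-analytically independent monomials $s^\beta\overline{s}^\gamma$ forces every Taylor coefficient to vanish, so $G\equiv 0$ near $(z_0,\overline{z_0})$ and then identically by the identity theorem, since $\dom\times\overline{\dom}$ is connected. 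Hence $L\equiv R$, which is the claim. The only genuinely delicate point is this uniqueness step; everything else merely unwinds the definition of $\clk_\al$. The same reasoning applies verbatim to $\dom=\dn$, the sole change being the product form of the Cauchy kernel, which is again sesqui-holomorphic with positive diagonal values.
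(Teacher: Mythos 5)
Your proof is correct, but it does not follow the paper's route, because the paper offers no proof of this proposition at all: it simply cites \cite[Proposition~3.5]{AD20} for $\dom=\bn$ and asserts that essentially the same argument applies to $\dom=\dn$. The argument behind that citation is computational: it rests on the structure of pluriharmonic measures, establishing a formula for the double Cauchy integral $\int_{\ddo}C(z,\za)C(\za,w)\,d\mu(\za)$ in terms of the Cauchy transform $\mu_+$ for pluriharmonic $\mu$, and then identifying $(\clk_\al)_+$ from the defining relation $P[\clk_\al]=\Rl\left(\frac{\al+\bdd}{\al-\bdd}\right)$. Your polarization argument replaces all of this with soft reasoning: both sides are sesqui-holomorphic, they agree on the diagonal --- where the claimed identity reduces exactly to the definition of $\clk_\al$, via $(1-\overline{\al}\bdd(z))(1-\al\overline{\bdd(z)})=|\al-\bdd(z)|^2$ --- and a sesqui-holomorphic function vanishing on the diagonal vanishes identically. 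All three steps are sound; the only point you pass over silently is that your $G(z,u)$ is a priori only separately holomorphic, so before expanding it in a joint power series you should invoke Hartogs' theorem on separate holomorphy (or observe that $L-R$ is continuous, which is immediate here, and apply Osgood's lemma) --- a one-line fix, not a gap in substance. As for what each approach buys: the computational route of \cite{AD20} yields information valid for arbitrary pluriharmonic measures, not only Clark measures, whereas your argument is shorter, uses nothing beyond the defining property of $\clk_\al$ together with finiteness and positivity of the measure, and applies verbatim to $\bn$ and $\dn$, making the polydisc case genuinely transparent rather than ``essentially the same argument.''
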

\begin{proof}
  The equality
  \[
  \int_{\ddo} P(z, \za) \, d\clk_\al(\za) = \frac{1-|\ph(z)|^2}{|\al- \ph(z)|^2}, \quad z\in \dom,
  \]
  and the definition of $P(z,\za)$ provide
   \[
  \int_{\ddo} C(z, \za) C (\za, z)\, d\clk_\al(\za) = \frac{1-|\ph(z)|^2}{|\al- \ph(z)|^2} C(z,z), \quad z\in \dom.
  \]
  It remains to apply Lemma~\ref{l_diag}.
\end{proof}

For $\mu\in M(\ddo)$, recall that $\mu_+$ denotes the Cauchy transform of $\mu$.

\begin{corollary}\label{c_cauchyof_clk}
Let $\ph: \dom\to\Dbb$, $d\ge 2$, be a holomorphic function
and let $\clk_\al = \clk_\al[\ph]$, $\al\in \Tbb$.
Then
\[
(\clk_\al)_+(z) = \frac{1}{1-\overline{\al} \ph(z)} + \frac{\al\overline{\ph(0)}}{1-\al\overline{\ph(0)}}
\]
for all $\al\in\Tbb$, $z\in\dom$.
\end{corollary}
\begin{proof}
  Apply Proposition~\ref{p_cauchy_dbl} with $w=0$.
\end{proof}

\section{Clark measures and de Branges--Rovnyak spaces}\label{s_clkdBR}
In this section, $b:\dom\to\Dbb$ is an arbitrary non-constant holomorphic function.

\subsection{Unitary operators $U_{\bdd, \al}: \dbr(\bdd)\to H^2(\clk_\al[b])$}
Fix an $\al\in\Tbb$.
Let $\clk_\al =\clk_\al[b]$ and let $\krr_w(\cdot) = \krr_\bdd(\cdot, w)$,
where $\krr_\bdd(z, w)$ denotes the reproducing kernel for $\dbr(b)$.
Define
\[
(U_{\bdd, \al} \krr_w)(\cdot) = (1-\al \overline{\bdd(w)}) C(\cdot, w),\quad w\in\dom.
\]
Let $H^2(\clk_\al)$ denote the closed linear span of the holomorphic polynomials or, equivalently,
of $C(\cdot, w)$, $w\in\dom$, in $L^2(\clk_\al)$.
In other words, $H^2(\clk_\al)$ is the Hardy space generated by $\clk_\al$.

\begin{thm}\label{t_kla}
For each $\al\in\Tbb$, $U_{\bdd, \al}$ has a unique extension
to a unitary operator from $\dbr(\bdd)$ onto $H^2(\clk_\al)$.
\end{thm}
\begin{proof}
Fix an $\al\in\Tbb$.
Applying Proposition~\ref{p_cauchy_dbl}, we obtain
  \begin{align*}
    (U_{\bdd, \al} \krr_w, U_{\bdd, \al} \krr_z)_{L^2(\clk_\al)}
    &=\int_{\ddo} (1-\al \overline{\bdd(w)}) C(\za, w) (1-\overline{\al} \bdd(z)) C(z, \za)\, d\clk_\al(\za) \\
    &=(1-\al \overline{\bdd(w)})(1-\overline{\al} \bdd(z)) \int_{\ddo} C(\za, w) C(z, \za)\, d\clk_\al(\za) \\
    &= (1- \bdd(z) \overline{\bdd(w)}) C(z, w)\\
    &= \krr_\bdd(z,w) = (\krr_w, \krr_z)_{\dbr(\bdd)}.
  \end{align*}
So, $U_{\bdd, \al}$ extends to an isometric embedding of $\dbr(\bdd)$ into $L^2(\clk_\al)$.
Since the linear span of the family $\{\krr_w\}_{w\in\dom}$ is dense in $\dbr(\bdd)$,
the extension is unique.
Finally, $U_{\bdd, \al}$ maps $\dbr(\bdd)$ onto $H^2(\clk_\al)$
by the definition of $H^2(\clk_\al)$.
\end{proof}

\subsection{Partial isometries $V_{\bdd, \al}: L^2(\clk_\al[b]) \to \dbr(\bdd)$}
Define
\begin{equation}\label{e_Vba_cauchy}
  (V_{\bdd, \al} g)(z) = (1-\overline{\al}\bdd(z))(g\clk_\al)_+(z), \quad g\in L^2(\clk_\al),\ z\in\dom.
\end{equation}

\begin{thm}\label{t_Vba_df}
For each $\al\in\Tbb$,
formula~\eqref{e_Vba_cauchy} defines a partial isometry from $L^2(\clk_\al)$ into $\dbr(b)$.
The restriction of $V_{\bdd, \al}$ to $H^2(\clk_\al)$ coincides with $U_{b, \al}^*$; in particular,
\begin{equation}\label{e_kernels}
V_{\bdd, \al} C(\cdot, w)(z) = (1-\al \overline{b(w)})^{-1} \krr_\bdd (z,w),
\end{equation}
where $\krr_\bdd(z, w)$ denotes the reproducing kernel for $\dbr(b)$.
\end{thm}
\begin{proof}
For $g(\za) = (1-\al \overline{\bdd(w)}) C(\za, w)$ with $w\in\dom$,
the definition of $U_{\bdd, \al}$ and Proposition~\ref{p_cauchy_dbl} guarantee that
\begin{equation}\label{e_Ual_cauchy}
  (U_\al^* g)(z) = (1-\overline{\al}\bdd(z))(g\clk_\al)_+(z), \quad z\in\dom.
\end{equation}
Therefore, \eqref{e_Ual_cauchy} holds for all $g\in H^2(\clk_\al)$;
hence, the restriction of $V_{\bdd, \al}$ on $H^2(\clk_\al)$
coincides with $U^*_\al$.
If $h\in L^2(\clk_\al)$ and $h\bot H^2(\clk_\al)$, then $(h\clk_\al)_+=0$.
Therefore, $V_{\bdd, \al}$ maps $L^2(\clk_\al)$ into $\dbr(b)$,
as required.
\end{proof}

\subsection{Cauchy transforms of $f\clk_\al$ with $f\in H^2(\clk_\al)$}
The following proposition is probably of independent interest.

\begin{proposition}\label{p_cau_zero}
Let $\al\in\Tbb$ and $b:\dom\to \Dbb$ be a non-constant holomorphic function.
Then $(f\clk_\al)_+ \not\equiv 0$ for any $f\in H^2(\clk_\al)\setminus \{0\}$.
\end{proposition}
\begin{proof}

Let $\al\in\Tbb$ and $f\in H^2(\clk_\al)\setminus \{0\}$.
Since $\krr_\bdd(z, \cdot) \in \overline{\dbr(\bdd)}$ and $U_\al = U_{b, \al}: \dbr(\bdd)\to H^2(\clk_\al)$ is unitary,
we obtain
\[
\begin{aligned}
U_\al^* f(z)
&= \int_{\ddo} U_\al^* f(\za) \krr_\bdd(z, \za)\, d\mn(\za) \\
&= \int_{\ddo} (1- \overline{\al} I(z)) f(\za) C(z, \za)\, d\clk_\al(\za) \\
&= (1- \overline{\al} I(z)) (f\clk_\al)_+(z)
\end{aligned}
\]
for $z\in\dom$.
Now, assume that $(f\clk_\al)_+ \equiv 0$. Then $U_\al^* f\equiv 0$, hence, $f=0$.
This contradiction ends the proof.
\end{proof}

For an inner function $b$, it is natural to ask whether $U_{b, \al}: \dbr(b) \to L^2(\clk_\al)$ is surjective or,
equivalently, $H^2(\clk_\al[b]) = L^2(\clk_\al[b])$.

\begin{corollary}\label{c_ual_onto}
Let $\al\in\Tbb$ and $I: \dom\to \Dbb$ be an inner function.
Then the following properties are equivalent:
\begin{enumerate}
  \item[(i)] $U_{I, \alpha}$ maps $\dbr(b)$ onto $L^2(\clk_\al)$;
  \item[(ii)] if $f\in L^2(\clk_\al)$ and $(f\clk_\al)_+ \equiv 0$, then $f=0$.
\end{enumerate}
\end{corollary}
\begin{proof}
(ii)$\Rightarrow$(i)
Assume that (i) does not hold, that is, $H^2(\clk_\al)\neq L^2(\clk_\al)$.
Then the definition of $H^2(\clk_\al)$ guarantees that there exists $f\in L^2(\clk_\al)\setminus \{0\}$ such that
\[
\int_{\ddo} f(\za) \overline{C(\za, z)}\, d\clk_\al(\za) =0
\]
for all $z\in \dom$.
In other words, $(f\clk_\al)_+ \equiv 0$ and we arrive to a contradiction.

By Proposition~\ref{p_cau_zero}, (i) implies (ii), hence, the proof is finished.
\end{proof}

\begin{remark}
Corollary~\ref{c_ual_onto} reduces to Proposition~\ref{p_cau_zero}
for $\dom=B_n$. Indeed, we have $H^2(\clk_\al[I]) = L^2(\clk_\al[I])$
for any $\al\in\Tbb$ and any inner function $I$ in the unit ball $B_n$, $n\ge 1$; see \cite{AD20}.
However, this is not the case for many inner functions in the polydisc $\dn$, $n\ge 2$.
To give a simple example, consider the following inner function: $I(z) =z_1$, $z\in\dn$, $n\ge 2$.
We have
\[
\clk_\al = \delta_\al(\za_1) \otimes m(\za_2) \otimes\dots \otimes m(\za_n),
\]
where $m$ denotes the normalized Lebesgue measure on $\Tbb$.
For all $\al\in\Tbb$, the space $H^2(\clk_\al)$ is strictly smaller than
$L^2(\clk_\al)$.
\end{remark}

\section{Proof of Theorem~\ref{t_tal_on}}\label{s_prf}

\subsection{Auxiliary results and definitions}
\subsubsection{Pluriharmonic measures}
A measure $\mu\in M(\ddo)$ is called \textsl{pluriharmonic} if the Poisson integral
\[
P[\mu](z) = \int_{\ddo} P(z, \za)\, d\mu(\za), \quad z\in\dom,
\]
is a pluriharmonic function.
Let $\plh(\dom)$ denote the set of all pluriharmonic measures.
Clearly, every Clark measure is an element of $\plh(\dom)$.

\subsubsection{Totally singular measures}
By definition, the ball algebra $A(\bn)$ consists of those $f\in C(\overline{\bn})$ which are holomorphic in $\bn$.
Let $M_0(\spn)$ denote the set of those probability measures $\rho\in M(\spn)$
which represent the origin for $A(\bn)$, that is,
\[
\int_{\spn} f\, d\rho = f(0) \quad \textrm{for all}\ f\in A(\bn).
\]
Elements of $M_0(\spn)$ are called representing measures.

\begin{definition}\label{d_totnull}
A measure $\mu\in M(\spn)$ is said to be \textsl{totally singular} if $\mu\bot \rho$
for all $\rho\in M_0(\spn)$.
\end{definition}

\begin{proposition}[{\cite[Theorem~10]{Dzap94}}]\label{t_aif}
Let $\mu\in\plh(\spn)$. Then $\mu^s$ is totally singular.
\end{proposition}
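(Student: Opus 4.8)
The plan is to reduce the statement to the one–dimensional F.\ and M.\ Riesz theorem by combining a conjugate–function estimate with the band structure of $M(\spn)$. First I would split $\mu$ into its real and imaginary parts. Since pluriharmonicity of $P[\mu]$ passes to $\Rl P[\mu]$ and $\mathrm{Im}\,P[\mu]$ separately, both the real and imaginary parts of $\mu$ lie in $\plh(\spn)$; and because total singularity of $\mu^s$ is a property of its total variation, it suffices to treat a real $\mu\in\plh(\spn)$. For such a $\mu$ the Poisson integral $P[\mu]$ is a real pluriharmonic function on $\bn$, and since $\bn$ is simply connected we may write $P[\mu]=\Rl F$ for a single–valued holomorphic $F\in\hol(\bn)$ normalized by $F(0)=\mu(\spn)\in\mathbb R$.

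Second, I would show $F\in H^p(\bn)$ for every $p\in(0,1)$. As $\mu$ is a finite measure, $P[\mu]$ is bounded in $L^1(\mn)$, so its pluriharmonic conjugate is controlled by the weak–type (Kolmogorov) estimate: restricting $F$ to almost every complex line through the origin gives a one–variable holomorphic function whose real part is the Poisson integral of a finite slice measure, the classical conjugate–function theorem places each slice in $H^p(\Dbb)$, and integrating in the slice parameter yields $F\in H^p(\bn)$ for $p<1$. Consequently $F$ has radial limits $F^\ast\in L^p(\mn)$ a.e., with $F(r\cdot)\to F^\ast$ in $L^p(\mn)$. Passing to the weak-$\ast$ limit in $P[\mu](r\cdot)\,d\mn$ then exhibits the Lebesgue decomposition: $d\mu^a=(\Rl F^\ast)\,d\mn$, while $\mu^s$ is the remaining singular mass, automatically $\mu^s\bot\mn$.

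The final and hardest step is to upgrade $\mu^s\bot\mn$ to the statement that $\mu^s$ is totally singular. The representing measures generate a band $\mathcal B$ in $M(\spn)$ — the band of Henkin measures — and by definition $\mu^s$ is totally singular exactly when it lies in the complementary band, i.e.\ when $\mu^s$ is mutually singular with every measure of $\mathcal B$. So it remains to prove that the Henkin component of the pluriharmonic measure $\mu$ is absolutely continuous. This is precisely the several–variable F.\ and M.\ Riesz phenomenon and is the main obstacle: for $n\ge 2$ there is no inner–outer factorization of $F$, so one cannot divide out a singular inner factor as in the disc. The route I would take is once more by slices: on almost every complex line the slice of $F$ belongs to $H^p(\Dbb)$, the one–dimensional F.\ and M.\ Riesz theorem forces the singular part of the slice boundary measure to be mutually singular with arc length, and the slice behavior of Henkin measures allows these facts to be reassembled into the vanishing of the Henkin part of $\mu^s$; alternatively one may invoke Henkin's several–variable F.\ and M.\ Riesz theorem together with the Cole--Range band decomposition directly. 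Either way, the three steps combine to show that $\mu^s$ is totally singular.
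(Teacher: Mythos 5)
The paper itself offers no proof of this proposition: it is imported as \cite[Theorem~10]{Dzap94}, with the positive case (the only one the paper actually uses, since Clark measures are positive) attributed to \cite{Aab85}. So your attempt has to stand on its own, and it has a genuine gap exactly where the content of the theorem lies. Your first two steps are correct but carry no weight: the reduction to real $\mu$ is fine, and the slice/Kolmogorov argument giving $P[\mu]=\Rl F$ with $F\in H^p(\bn)$ for all $p<1$ is classical, but all it produces is the identification $d\mu^a=(\Rl F^*)\,d\mn$, i.e.\ the Lebesgue decomposition of $\mu$ with respect to $\mn$. The statement $\mu^s\bot\mn$ is true by definition of $\mu^s$. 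The theorem asserts singularity with respect to \emph{every} $\rho\in M_0(\spn)$, and this class is vastly larger than $\{\mn\}$: for instance, normalized arc length on a single circle $\{\lambda\zeta_0:\lambda\in\Tbb\}$ is a representing measure, and it is carried by a $\mn$-null set.

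Your final step is asserted rather than proved, and the tools you invoke cannot deliver it. First, the sentence that the one-dimensional F.\ and M.\ Riesz theorem ``forces the singular part of the slice boundary measure to be mutually singular with arc length'' is a tautology (singular parts are singular to arc length by definition), and F.\ and M.\ Riesz is inapplicable anyway: the slice boundary measures are boundary measures of real parts of $H^p$ functions with $p<1$, not analytic measures, and they can be \emph{purely} singular --- in the case relevant to the paper, $\mu=\clk_\al[\ier]$ with $\ier$ inner, $\mn$-almost every slice $\lambda\mapsto \ier(\lambda\zeta)$ is an inner function, so almost every slice measure is singular and no absolute-continuity statement holds at the slice level. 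Second, any argument quantified over $\mn$-almost every slice is blind to a representing measure carried by a single slice (a $\mn$-null set), so ``reassembling'' slice information cannot show that the Henkin component of $\mu^s$ vanishes. Third, Henkin's several-variable F.\ and M.\ Riesz theorem concerns measures annihilating $A(\bn)$; a pluriharmonic measure is not an annihilating measure, so it does not apply ``directly'' --- building the bridge from pluriharmonicity to the Henkin band (via Valskii's decomposition, the Cole--Range theorem, and the spectral characterization of $\plh(\spn)$) is precisely the content of \cite{Dzap94}, which you have not supplied. For contrast, the known proof in the positive case is short but entirely different: since the dilates $F(r\,\cdot)$ belong to $A(\bn)$, one has $\int_{\spn}P[\mu](r\zeta)\,d\rho(\zeta)=P[\mu](0)$ for every $\rho\in M_0(\spn)$ and $r<1$; Fatou's lemma then shows that the set where $\liminf_{r\to 1}P[\mu](r\zeta)=\infty$ is $\rho$-null, while differentiation theory shows this set carries $\mu^s$, whence $\mu^s\bot\rho$.
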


\begin{remark}
For positive pluriharmonic measures, the above theorem was obtained in \cite[Chapter~5, Section~3.3.3]{Aab85}.
In fact, we will apply Proposition~\ref{t_aif} to Clark measures, that is, to positive $\mu\in\plh(\spn)$.
\end{remark}

\subsubsection{Henkin measures}
\begin{definition}[see {\cite[Section~9.1.5]{Ru80}}]\label{d_Henkin}
We say that $\mu\in M(\spn)$ is a \textsl{Henkin measure} if
\[
\lim_{j\to\infty} \int_{\spn} f_j\, d\mu =0
\]
for any bounded sequence $\{f_j\}_{j=1}^\infty \subset A(\bn)$ with the following property:
\[
\lim_{j\to\infty} f_j(z) = 0 \quad\textrm{for any\ } z\in\bn.
\]
\end{definition}

\subsection{Proof of Theorem~\ref{t_tal_on}}
We are given an inner function $I$ in $\bn$ and a non-constant holomorphic function
$b: \bn\to \Dbb$.
Without loss of generality, assume that $\al=1$.
So, let $\clk=\clk_1[I]$, $\mu = \clk_1[b]$ and $\iker_w = \iker(\cdot, w)$,
where $\iker(z, w)$
denotes the reproducing kernel for $I^*(H^2) = \dbr(I)$.
Applying formula~\eqref{e_Vba_cauchy} and property~\eqref{e_kernels} with $\bdd =I$, we obtain
\begin{equation}\label{e_by_uer}
(1-b) (C(\cdot, w)\clk)_+ = \frac{1-\bdd}{1-\ier} \uer C(\cdot, w)
= ( 1- \overline{\ier(w)})^{-1} \frac{1-\bdd}{1-\ier} \iker_w.
\end{equation}

Now, we are in position to prove the theorem.

(i)$\Rightarrow$(ii)
By the definition of $\ubb$, we have
\[
\ubb\left( \frac{d\clk}{d\mbb} C(\cdot, w)\right) =
(1-b) \left(\frac{d\clk}{d\mbb}C(\cdot, w) \mbb\right)_+
= (1-b) (C(\cdot, w)\clk)_+.
\]
Since $\ubb$ maps into $\dbr(\bdd)$, combination of the above property and \eqref{e_by_uer}
provides (ii).

(ii)$\Rightarrow$(iii) This implication is trivial.

(iii)$\Rightarrow$(i)
By assumption, we are given a point $w\in\bn$ and a function $q = q_w \in L^2(\mbb)$ such that
\[
( 1- \overline{\ier(w)})^{-1} \frac{1-\bdd}{1-\ier} \iker_w = \ubb q
= (1-b)(q\mbb)_+.
\]
The above property and \eqref{e_by_uer} guarantee that
\[
(C(\cdot, w)\clk- q\mbb)_+ =0.
\]
Hence,
\[
C(w, \cdot)\clk- \overline{q}\mbb
\]
is a Henkin measure.
Thus, by the Cole--Range Theorem (see \cite{CR72} or \cite[Theorem~9.6.1]{Ru80}),
there exists a representing measure $\rho$ such that
$C(\cdot, w)\clk- {q}\mbb \ll \rho$; in particular,
\begin{equation}\label{e_llrep}
C(\cdot, w)\clk- {q}\mbb^s \ll \rho\quad \textrm{for some}\ \rho\in M_0(\spn).
\end{equation}
Recall that $\clk$ is a singular measure.
Therefore, $\clk$ and $\mbb^s$ are totally singular
by Proposition~\ref{t_aif}. Hence,
\[
C(\cdot, w)\clk- {q}\mbb^s\ \textrm{is totally singular.}
\]
Combining this observation and \eqref{e_llrep},
we conclude that
\[
\clk= \frac{q}{C(\cdot, w)} \mbb^s.
\]
In particular, $\clk\ll\mbb$ and
$\frac{d\clk}{d\mbb}\in L^2(\mbb)$, as required.
The proof of Theorem~\ref{t_tal_on} is finished.

\end{document}